\documentclass[12pt,reqno]{amsart}
\usepackage{amsthm,amssymb,mathabx}
\usepackage{bm,fullpage}

\usepackage{tikz}
\usetikzlibrary{shapes,positioning,intersections,quotes}
\usepackage[font=small,labelfont=bf]{caption}

\usepackage[
  bookmarks = true, 
  pagebackref = false, 
  pdfauthor = {Anderson Palsson}, 
  pdftitle = \text{AP}, 
  colorlinks = true, 
  linkcolor = blue, 
  citecolor = blue]
{hyperref}

\newtheorem{thm}{Theorem}

\newtheorem{cor}{Corollary}

\theoremstyle{remark}

\newtheorem{remark}{Remark}

\newcommand{\R}{\ensuremath{\mathbb{R}}}

\newcommand{\Z}{\ensuremath{\mathbb{Z}}}

\newcommand{\N}{\ensuremath{\mathbb{N}}}

\newcommand{\eq}{\begin{equation}}
\newcommand{\ee}{\end{equation}}

\numberwithin{equation}{section}

\title{Bounds for discrete multilinear spherical maximal functions in higher dimensions}
\author[T. Anderson]{Theresa C. Anderson}
\address{
	Department of Mathematics
	\\ Purdue University
	\\ 150 N. University St.
	\\ W. Lafayette, IN 47907
	\\	U.S.A.
}
\email{tcanderson@math.purdue.edu}

\author[E. Palsson]{Eyvindur Ari Palsson}
\address{
	Department of Mathematics
	\\	Virginia Tech 
	\\	225 Stanger St.
	\\	Blacksburg, VA 24061
	\\	U.S.A.
}
\email{palsson@vt.edu}

\begin{document}
\maketitle

\begin{abstract}
We find the sharp range for boundedness of the discrete bilinear spherical maximal function for dimensions $d \geq 5$.  That is, we show that this operator is bounded on $l^{p}(\Z^d)\times l^{q}(\Z^d) \to l^{r}(\Z^d)$ for $\frac{1}{p} + \frac{1}{q} \geq \frac{1}{r}$ and $r>\frac{d}{2d-2}$ and we show this range is sharp.  Our approach mirrors that used by Jeong and Lee in the continuous setting.  For dimensions $d=3,4$, our previous work, which used different techniques, still gives the best known bounds.  We also prove analogous results for higher degree $k$, $\ell$-linear operators.
\end{abstract}

\section{Introduction}

In the work \cite{AP}, the authors used the Circle Method from analytic number theory to show bounds for the discrete bilinear  spherical maximal functions on the range $l^{p}(\Z^d)\times l^{q}(\Z^d) \to l^{r}(\Z^d)$ for $\frac{1}{p} + \frac{1}{q} \geq \frac{1}{r}$ and $r>\frac{d}{d-2}$ for all dimensions $d \geq 3$, as well as analogous multilinear results.  In this followup work, we show that the argument used by Jeong and Lee to show the sharp range for the continuous bilinear spherical maximal function \cite{JL} can be adapted to the discrete setting for dimensions $d\geq 5$; moreover, through testing specific functions, we show that this range is sharp.  Therefore, as in the linear setting, the discrete bounds are truly different from the continuous ones.  We emphasize that this technique does not appear to work in dimensions $3$ and $4$; therefore the bounds in \cite{AP} are still the best known for these cases.  An interesting open question would thus be to see if one can determine the sharp bounds for these two cases.  

We also have results for the multilinear and higher degree bounds.  The multilinear bounds use an induction argument, and these bounds are also sharp.  The higher degree bounds use information from \cite{ACHK} on the current best bounds for the linear degree $k$ spherical operator.  We state these results first for the $\ell$-linear, degree $2$ setting since these are easier to state and are also sharp.  We then state the bounds for the $\ell$-linear, degree $k$ version (which actually yields the previous case as a corollary).  The linear degree $k$ spherical maximal function is conjectured to be bounded on the sharp range $p > \frac{d}{d-k}$; if this was true then our bounde would also be sharp.  Therefore any improvements to bounds in the linear degree $k$ setting directly improve the $\ell$-linear degree $k$ results.  We comment that we have a similar loss in dimensionality when compared with \cite{AP}; that is, for the $\ell$-linear (in $\Z^{ld}$), degree $2$ setting, the observation that we show requires that $ d\geq 5$ compared with $d > 4/\ell$ in \cite{AP}.

All the precise statements and proofs appear in the next section.  We begin with the sharp bounds for the bilinear operator, then move to the multlinear and we close with an account of the higher degree version. We refer the reader to our previous work \cite{AP} and the references therein for more background and history on the topic.

We remark that the observation that the technique of Jeong and Lee carries over in the discrete setting was independently discovered by Michael Lacey and communicated to us \cite{ML}.  Due to the interest in the problem, we decided to make our observations public; we are interested in addressing applications and extensions in future work.

\subsection{Acknowledgements}
T. C. Anderson was supported in part by NSF DMS-1502464.  E. A. Palsson was supported in part by Simons Foundation Grant \#360560.

\section{Statements and proofs}
Recall that the discrete bilinear spherical averages can be written as
\[
T_\lambda(f,g)(\bm{x}) = \left| \frac{1}{N(\lambda)}\sum_{\bm{u}^2+\bm{v}^2=\lambda}f(\bm{x}-\bm{u})g(\bm{x}-\bm{v}) \right|
\]
where $\bm{u}$, $\bm{v} \in \Z^d$ and $N(\lambda) = \#\{ (\bm{u},\bm{v}) \in \Z^d\times\Z^d: |\bm{u}|^2+|\bm{v}|^2=\lambda\}$ is the number of lattice points on the sphere of radius $\lambda^{1/2}$ in $\R^{2d}$, which is $\lambda^{d-1}$ by the Hardy-Littlewood asymptotic for all $d \geq 3$.
We define the corresponding discrete bilinear spherical maximal operator as
\[\
T^*(f,g)(\bm{x}) = \sup_{\lambda \in \N} | T_\lambda(f,g)(\bm{x}) | 
\]

We will prove the following:
\begin{thm}
\label{Main theorem}
$T^*$ is bounded $l^{p}(\Z^d)\times l^{q}(\Z^d) \to l^{r}(\Z^d)$ for all $d \geq 5$ where $\frac{1}{p}+\frac{1}{q}\geq \frac{1}{r}$, $r >\frac{d}{2d-2}$ and $p,q>1$.  Moreover this range is sharp.
\end{thm}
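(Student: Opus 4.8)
\emph{Positive direction.} The plan is to transplant the argument of Jeong and Lee \cite{JL}. After reducing to $f,g\ge 0$, I would decompose the bilinear average according to how the squared radius $\la$ is split between the two copies of $\Z^d$,
\begin{equation*}
T_\la(f,g)(\x)=\frac1{N(\la)}\sum_{j=0}^{\la}r_d(j)\,r_d(\la-j)\,(A_jf)(\x)\,(A_{\la-j}g)(\x),
\end{equation*}
where $A_m$ is the discrete linear spherical average at squared radius $m$ and $r_d(m)$ is the number of representations of $m$ as a sum of $d$ squares; this is the discrete analogue of the polar slicing of the sphere $\{|\uu|^2+|\vv|^2=1\}\subset\R^{2d}$ used in \cite{JL}, and it also realizes $T_\la(f,g)(\x)$ as the linear spherical average on $\Z^{2d}$ of $f\otimes g$, evaluated on the diagonal. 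Since $r_d(m)\asymp m^{d/2-1}$ for $d\ge 5$, the weights $r_d(j)r_d(\la-j)/N(\la)$ are essentially scale invariant, and splitting the sum according to whether $j$ or $\la-j$ is the larger index — bounding the small-radius factor by a discrete Hardy--Littlewood average $M$ and the large-radius factor by the discrete linear spherical maximal operator $\mathcal{A}_*$ — yields the elementary pointwise bound $T^*(f,g)\lesssim (Mf)(\mathcal{A}_*g)+(\mathcal{A}_*f)(Mg)$. This alone does not reach the whole range; for the remaining exponents (those on and near the line $\tfrac1r=\tfrac1p+\tfrac1q$ with $p$ or $q$ small) I would run, as in \cite{JL}, a square-function/Sobolev-embedding argument in $\la$ for $\sup_\la|T_\la(f,g)|$, using the Fourier decay of the discrete $(2d)$-sphere measure together with the $\ell^2(\Z^d)$-boundedness of $\mathcal{A}_*$. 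By the Magyar--Stein--Wainger theorem, $\mathcal{A}_*$ is bounded on $\ell^p(\Z^d)$ for $p>\tfrac d{d-2}$, and in particular on $\ell^2$ — and it is exactly this $\ell^2$ bound, which fails when $d=3,4$, that forces the restriction $d\ge 5$.

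\emph{Assembling the full range.} I would then interpolate. Record also the trivial endpoints $T^*\colon\ell^1\times\ell^1\to\ell^\infty$ and $\ell^\infty\times\ell^\infty\to\ell^\infty$ (immediate from $T_\la(f,g)(\x)\le N(\la)^{-1}\|f\|_1\|g\|_1\le\|f\|_1\|g\|_1$ and $\le\|f\|_\infty\|g\|_\infty$). Feeding the product bound into H\"older's inequality and using the endpoint/smoothing mapping properties of $\mathcal{A}_*$ (Ionescu's weak-type endpoint estimate, which interpolated against the trivial $\ell^\infty$ bound gives $\mathcal{A}_*\colon\ell^p\to\ell^{dp/(d-2),\infty}$) lifts $\tfrac1r$ up to $\tfrac{d-2}d\tfrac1p+\tfrac1q$, and so to $r>\tfrac d{2d-2}$ on the diagonal; the square-function estimate supplies the rest of the line; and bilinear complex interpolation among all these bounds, combined with the inclusion $\ell^s(\Z^d)\hookrightarrow\ell^r(\Z^d)$ for $r\ge s$, fills out the whole region $\tfrac1p+\tfrac1q\ge\tfrac1r$, $r>\tfrac d{2d-2}$, $p,q>1$.

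\emph{Sharpness.} For the necessity I would test on explicit functions. With $f=g=\1_{\{\0\}}$ one has $T^*(\1_{\{\0\}},\1_{\{\0\}})(\x)=N(2|\x|^2)^{-1}\asymp|\x|^{-2(d-1)}$ (the only admissible $\la$ is $2|\x|^2$), which lies in $\ell^r(\Z^d)$ exactly when $r>\tfrac d{2d-2}$; this pins down the condition on $r$. With $f=g=\1_{B_R}$, the indicator of a ball of radius $R$, and $\la\asymp R^2$, essentially every admissible pair contributes, so $T^*(f,g)\gtrsim 1$ on $B_R$ and $R^{d/r}\lesssim R^{d/p}R^{d/q}$ forces $\tfrac1r\le\tfrac1p+\tfrac1q$ as $R\to\infty$. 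The conditions $p,q>1$ — which enter through the Hardy--Littlewood maximal operator — are similarly seen to be necessary by testing on suitably spread-out point configurations.

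\emph{The main obstacle.} The crux is the square-function/$\ell^2$ estimate: the elementary product bound by itself produces a strictly smaller range (it never reaches the line $\tfrac1r=\tfrac1p+\tfrac1q$, whose lower end corresponds to $p=q=\tfrac d{d-1}$, i.e.\ $r=\tfrac d{2d-2}$), and closing this gap requires the full Jeong--Lee machinery, whose discrete incarnation relies essentially on $\ell^2$-boundedness of the discrete linear spherical maximal operator. This is exactly the ingredient that is unavailable in dimensions $d=3,4$ (there $\tfrac d{d-2}\ge 2$), which is why those cases fall outside the present method.
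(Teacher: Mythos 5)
Your central mechanism --- dominating $T_\la(f,g)$ pointwise by a Hardy--Littlewood average of $f$ times a spherical average of $g$, hence $T^*(f,g)\le M(f)\cdot S(g)$ --- is exactly the paper's proof, and your sharpness test $f=g=\delta_0$, which gives $T^*(f,g)(\x)=N(2|\x|^2)^{-1}\asymp|\x|^{-2(d-1)}$ and forces $r>\frac{d}{2d-2}$, is a valid (slightly cleaner) variant of the paper's pair $(\chi_{[-L,L]^d},\delta_0)$. Where you go wrong is in the assessment that this product bound ``never reaches the line $\frac1r=\frac1p+\frac1q$'' and in the machinery you import to compensate. The claimed smoothing estimate $S:\ell^p\to\ell^{dp/(d-2),\infty}$ does not follow from interpolating Ionescu's restricted weak-type endpoint $\ell^{d/(d-2),1}\to\ell^{d/(d-2),\infty}$ against the trivial $\ell^\infty\to\ell^\infty$ bound: that interpolation moves source and target exponents together and yields only $\ell^p\to\ell^p$ for $p\ge\frac{d}{d-2}$, not an $\ell^p$-improving bound valid for $p$ near $1$ (which is what your ``diagonal'' computation needs). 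And the ``square-function/Sobolev-embedding argument in $\la$'' that is supposed to supply the remaining exponents is never carried out; as written it is a placeholder, so the proposal is incomplete at precisely the point you declare to be the crux.

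In fact no such crux exists: the elementary bound already gives the whole range. H\"older with $\frac1r=\frac1p+\frac1q$, together with $M:\ell^p\to\ell^p$ for $p>1$ and $S:\ell^q\to\ell^q$ for $q>\frac{d}{d-2}$, yields every $\frac1r<1+\frac{d-2}{d}=\frac{2d-2}{d}$, i.e.\ every $r>\frac{d}{2d-2}$; the symmetric bound $S(f)\cdot M(g)$, bilinear interpolation between the two one-sided families, and the nesting $\ell^s\hookrightarrow\ell^r$ for $r\ge s$ then cover all admissible $(p,q,r)$. Your worry comes from fixating on the symmetric point $p=q=\frac{d}{d-1}$, where neither factor alone satisfies the spherical-maximal hypothesis; but the critical exponent $r>\frac{d}{2d-2}$ is already attained along the asymmetric part of the line, and the symmetric points follow by interpolation. (Also, the restriction $d\ge5$ enters through the asymptotic $r_d(m)\asymp m^{d/2-1}$ and the nontrivial boundedness range of $S$, not specifically through an $\ell^2$ bound, which is only needed for the square-function detour you should delete.) Replace that detour with the H\"older computation above and your argument coincides with the paper's.
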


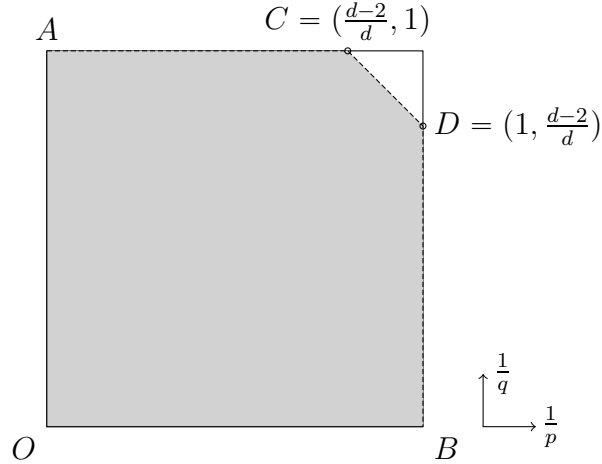
\begin{figure}[h!]\label{range}
\begin{tikzpicture}
\draw (0,0) rectangle (5,5);
\path[fill=gray!35] (0,0)--(0,5)--(4,5) --(5,4)--(5,0);
\draw[dash pattern= { on 2pt off 1pt}]  (0,5)--(4,5)--(5,4)--(5,0);
\draw (0,5)--(0,0)--(5,0);
\node [below left] at (0,0) {$O$};\node [above] at (0,5) {$A$};
\node [above] at (4,5) {$C=(\frac{d-2}{d},1)$};
\draw (4,5) circle [radius=0.04];
\node [right] at (5,4) {$D=(1,\frac{d-2}{d})$};
\draw (5,4) circle [radius=0.04];
\node [below right] at (5,0) {$B$};
\draw [<->] (5.8,0.7)--(5.8,0)--(6.5,0);
\node at (6.7,0) {$\frac{1}{p}$};
\node at (5.8,0.7) [right]{$\frac{1}{q}$};
\end{tikzpicture}
\caption{Range of $(\frac{1}{p},\frac{1}{q})$ for which $T^{*}$ is bounded on $l^p(\Z^d) \times l^q(\Z^d) \to l^r(\Z^d)$, $d\ge 5$.}
\end{figure}

\begin{proof}
We start with the first claim.  Jeong and Lee cleverly decomposed the continuous operator into a pointwise product of a Hardy-Littlewood maximal function and a (sub)-linear spherical maximal function.  Here we employ the same technique, which manifests itself nicely in the discrete setting.
We have
\[
|T_\lambda (f,g)(\bm{x})| \leq \frac{1}{\lambda^{\frac{d}{2}}}\sum_{|\bm{u}|^2 \leq \lambda}|f(\bm{x}-\bm{u})|\cdot \frac{1}{\lambda^{\frac{d}{2}-1}} \bigg|\sum_{|\bm{v}|^2 = \lambda - |\bm{u}|^2} g(\bm{x}-\bm{v})\bigg|.
\]
Next notice that
\[
\sup_{\lambda\in\N}\frac{1}{\lambda^{\frac{d}{2}-1}} \bigg|\sum_{|\bm{v}|^2 = \lambda - |\bm{u}|^2} g(\bm{x}-\bm{v})\bigg| \leq \sup_{\mu \in\N}\frac{1}{\mu^{\frac{d}{2}-1}} \bigg|\sum_{|\bm{v}|^2 = \mu} g(\bm{x}-\bm{v}) \bigg| = S(g)(\bm{x})
\]
where $S$ is the (discrete) linear spherical maximal function.   Therefore 
\[
|\sup_{\lambda\in\N}|T_\lambda (f,g)(\bm{x})| \leq \sup_{\lambda\in\N} \frac{1}{\lambda^{\frac{d}{2}}}\sum_{|\bm{u}|^2 \leq \lambda} |f(\bm{x}-\bm{u})| \cdot S(g)(\bm{x}) = M(f)(\bm{x}) \cdot S(g)(\bm{x})
\]
where $M$ is the usual (discrete) Hardy-Littlewood maximal function.  Now since $M$ is bounded on $l^p(\Z^d)$ for all $p>1$ and $S$ is bounded on $l^q(\Z^d)$ for all $q > \frac{d}{d-2}$, by H\"older's inequality and the nesting properties of the discrete $L^p$ spaces, we get $l^{p}(\Z^d)\times l^{q}(\Z^d) \to l^{r}(\Z^d)$ bounds for all $p>1$, $q >\frac{d}{d-2}$ and $r > \frac{d}{2d-2}$ where $\frac{1}{p}+\frac{1}{q}\geq\frac{1}{r}$.  Note that this argument only works for $d \geq 5$ since it depends on the Hardy-Littlewood asymptotic in $\Z^d$ (not $\Z^{2d})$ and the boundedness of the corresponding linear spherical maximal function.

Now we show that $r > \frac{d}{2d-2}$ is the sharp range.  Choose $f = \chi_L$ where $L = [-L, L]^d$ for some $L \geq 1$, and $g = \delta_0$.  We have that $f \in l^p(\Z^d)$ and $g\in l^q(\Z^d)$.  Then 
\[
\|\sup_{\lambda\in\N}|T_\lambda (f,g)\|_{l^r(\Z^d)} = \bigg(\sum_{\bm{x} \in \Z^d}\sup_{\lambda\in\N}\bigg|\frac{1}{\lambda^{d-1}}\sum_{|\bm{u}|^2+|\bm{v}|^2 = \lambda}\chi_L(\bm{x}-\bm{u})\delta_0(\bm{x}-\bm{v})\bigg|^r\bigg)^{1/r}
\]
\[
= \bigg(\sum_{\bm{x} \in \Z^d}\sup_{\lambda\in\N}\bigg|\frac{1}{\lambda^{d-1}}\sum_{\substack{|\bm{u}|^2=\lambda -|\bm{x}|^2\\ \bm{u} \in \bm{x} - L }} 1 \bigg|^r\bigg)^{1/r}
\]
by choosing $\lambda = 2|\bm{x}|^2$, so that $|\bm{u}|^2 = |\bm{x}|^2$ and $ \bm{u} \in \bm{x}-L$, the above is
\[
 \geq C\bigg(\sum_{\bm{x}\in\Z^d} \frac{1}{|\bm{x}|^{2(d-1)r}}\bigg)^{1/r} 
\]
which converges if and only if $r > \frac{d}{2d-2}$.  This argument relies on the fact that the sphere $|\bm{u}|^2 = |\bm{x}|^2$ has at least one lattice point, due to Lagrange's three and four square theorem, this necessary condition carries over for $d = 3,4$. 
\end{proof}

\begin{remark}
This result can easily be upgraded to weak type restricted estimates at endpoints using known endpoint estimates for the Hardy-Littlewood maximal function and endpoint estimates for the spherical maximal function. For the latter see the works of Ionescu \cite{Ionescu} and Hughes \cite{Hughes}. Similar comments also apply to all results stated below.
\end{remark}

We now comment on multilinear results.  We first state the result in the degree $2$ setting due to simplicity and to give an idea of the procedure.  We then extend it to the more general degree $k$ setting.
\begin{cor}
 $T^*(f_1, \dots , f_{\ell})$ is bounded on $l^{p_1}(\Z^d)\times\ldots\times l^{p_\ell}(\Z^d) \to l^{r}(\Z^d)$, $\frac{1}{p_1} + \ldots + \frac{1}{p_l} \geq \frac{1}{r}$, $r>\frac{d}{\ell d-2}$, $p_1,\ldots,p_{\ell}> 1$ and $d \geq 5$.
\end{cor}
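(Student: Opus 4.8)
The plan is to mirror the bilinear argument, peeling off one function at a time via an induction on $\ell$. The base case $\ell = 2$ is Theorem~\ref{Main theorem}. For the inductive step, suppose the claim holds for $\ell - 1$ linear operators. Write the $\ell$-linear average as
\[
T_\lambda(f_1,\ldots,f_\ell)(\bm x) = \frac{1}{N(\lambda)} \sum_{|\bm u_1|^2 + \cdots + |\bm u_\ell|^2 = \lambda} f_1(\bm x - \bm u_1)\cdots f_\ell(\bm x - \bm u_\ell),
\]
where now $N(\lambda) \sim \lambda^{(\ell d)/2 - 1}$ is the number of lattice points on the sphere of radius $\lambda^{1/2}$ in $\R^{\ell d}$ (Hardy--Littlewood asymptotic, valid for $\ell d \geq 5$, hence certainly for $d \geq 5$). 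As in the bilinear proof, the idea is to split off $f_\ell$: bound
\[
|T_\lambda(f_1,\ldots,f_\ell)(\bm x)| \leq \frac{1}{\lambda^{d/2}} \sum_{|\bm u_\ell|^2 \leq \lambda} |f_\ell(\bm x - \bm u_\ell)| \cdot \frac{1}{\lambda^{(\ell-1)d/2 - 1}}\bigg| \sum_{|\bm u_1|^2 + \cdots + |\bm u_{\ell-1}|^2 = \lambda - |\bm u_\ell|^2} f_1(\bm x - \bm u_1)\cdots f_{\ell-1}(\bm x - \bm u_{\ell-1})\bigg|,
\]
and then observe that the supremum over $\lambda$ of the second factor is dominated by the $(\ell-1)$-linear maximal operator $T^*(f_1,\ldots,f_{\ell-1})(\bm x)$, while the supremum of the normalized first sum is $\leq M(f_\ell)(\bm x)$, the discrete Hardy--Littlewood maximal function. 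This gives the pointwise bound $T^*(f_1,\ldots,f_\ell) \leq M(f_\ell) \cdot T^*(f_1,\ldots,f_{\ell-1})$.

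Given this factorization, apply the inductive hypothesis: $T^*(f_1,\ldots,f_{\ell-1})$ maps $l^{p_1} \times \cdots \times l^{p_{\ell-1}} \to l^{s}$ for $\frac{1}{s} \leq \frac{1}{p_1} + \cdots + \frac{1}{p_{\ell-1}}$ and $s > \frac{d}{(\ell-1)d - 2}$, and $M$ maps $l^{p_\ell} \to l^{p_\ell}$ for $p_\ell > 1$. By Hölder's inequality with exponents $s$ and $p_\ell$, the product maps into $l^r$ with $\frac{1}{r} = \frac{1}{s} + \frac{1}{p_\ell} \leq \frac{1}{p_1} + \cdots + \frac{1}{p_\ell}$, and using the nesting of discrete $l^p$ spaces (so that any larger $r$ also works) the admissible range for $r$ becomes $r > \big(\frac{(\ell-1)d - 2}{d} + 1\big)^{-1} = \frac{d}{\ell d - 2}$, as claimed. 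The constraint $d \geq 5$ is inherited from the base case. One should check the edge of the argument: the normalization exponents must match, i.e. $\frac{d}{2} + \big(\frac{(\ell-1)d}{2} - 1\big) = \frac{\ell d}{2} - 1$, which is exactly the exponent in $N(\lambda)$, so no loss occurs in the splitting.

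The main obstacle is essentially bookkeeping rather than a genuine difficulty: one must make sure the normalizations in the peeling step are compatible with the Hardy--Littlewood count in each intermediate dimension $(\ell - j)d$, and that the relevant linear and $(\ell-1)$-linear operators are actually bounded on the asserted ranges --- this is where $d \geq 5$ enters, since the bilinear base case already requires it. A minor subtlety is that $T^*$ as defined carries an absolute value inside, so the factorization should be stated at the level of the pre-maximal averages with absolute values inserted, exactly as in the proof of Theorem~\ref{Main theorem}; since all the bounds are in terms of $|f_i|$ this causes no trouble. Sharpness of the range follows by the same test-function computation as in the bilinear case: take $f_1 = \chi_L$ with $L = [-L,L]^d$, $f_2 = \cdots = f_{\ell-1} = \delta_0$ (or $\chi_{\{0\}}$ suitably), $f_\ell = \delta_0$, choose $\lambda = 2|\bm x|^2$ forcing most $\bm u_i = \bm 0$ and $|\bm u_1|^2 = |\bm x|^2$, and reduce to the convergence of $\sum_{\bm x} |\bm x|^{-2((\ell d/2) - 1) r}$, which holds precisely when $r > \frac{d}{\ell d - 2}$.
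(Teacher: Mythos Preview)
Your boundedness argument is correct and is exactly the paper's approach: peel off one factor via the discrete Hardy--Littlewood maximal function, dominate the rest by the $(\ell-1)$-linear spherical maximal operator, and close by induction and H\"older, with the base case $\ell=2$ supplied by Theorem~\ref{Main theorem}. Your remarks on the normalization exponents and on $d\geq 5$ being inherited from the base case are accurate and just flesh out what the paper states tersely.

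One small correction to your (unasked-for) sharpness sketch: the delta functions force $\bm u_2=\cdots=\bm u_\ell=\bm x$, not $\bm 0$, so the correct choice is $\lambda=\ell|\bm x|^2$ (not $2|\bm x|^2$) to make $|\bm u_1|^2=|\bm x|^2$; with that fix the computation goes through as you say, and matches the paper's degree-$k$ necessity argument specialized to $k=2$.
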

\begin{remark}
Again, this is worse dimensionally than the dependence of $d > 4/\ell$ obtained in \cite{AP} (albeit for a suboptimal range of exponents).  When $\ell \geq 5$ and $d =1$, see also \cite{Cook} for a related estimate.  
\end{remark}

\begin{proof}
In the $l$-linear setting, the count $N(\lambda)$ is approximately $\lambda^{\frac{\ell d}{2}-1}$ as long as $d> 4/\ell$.  We can apply the same argument as in Theorem \ref{Main theorem} to bound $T^*$ by a pointwise product of $M$ times the $(\ell - 1)$-linear spherical maximal function.  We argue by induction that this is bounded on $l^{p_1}(\Z^d)\times\ldots\times l^{p_l}(\Z^d) \to l^{r}(\Z^d)$ where $\frac{1}{p_1} + \ldots + \frac{1}{p_l} \geq \frac{1}{r}$, $p_1,\ldots,p_{\ell}> 1$, $r>\frac{d}{\ell d-2}$ and $d \geq 5$, 
\end{proof}

To comment on sharpness in the $\ell$-linear setting, we extend our results to degree $k$ spheres, $k \geq 3$, $k \in \Z$ (if $k$ is odd, we assume that $y^k = |y|^k$).
Recall that the $\ell$-linear degree $k$ discrete spherical averages are defined as:
\[
T_\lambda(f_1,\ldots,f_{\ell})(\bm{x}) = \left| \frac{1}{N(\lambda)}\sum_{\bm{u_1}^k+\cdots + \bm{u_\ell}^k=\lambda}f_1(\bm{x}-\bm{u_1})\cdots f_\ell (\bm{x}-\bm{u_\ell}) \right|
\]
where $N(\lambda)$ is asymptotic to $\lambda^{\frac{\ell d}{k}-1}$ for all $d > d_0(k)/\ell$.  The best known values of $d_0(k)$, to the best of our knowledge, are contained in \cite{ACHK}.  For example, for $k=3$, we would need $d > 10/3$.  We define the maximal function $T^*$ analogously.  

Also define \[r_0(d,k) = \frac{2+2\delta_0(d,k)}{(\ell - 1)(2+2\delta_0(d,k)) + (1+2\delta_0(d,k))},\] where $\delta_0(d,k)$, defined on page 2 of \cite{ACHK}, relates to the best known bounds for the discrete linear degree $k$ operator.  Finally define \[p_0(d,k) = max \{ 1+\frac{1}{1+2\delta_0(d,k)}, \frac{d}{d-k}\}.\]  This $p_0(d,k)$ provides the best known $l^p$ bounds for the discrete linear operator, and is conjectured to be equal to $\frac{d}{d-k}$ for all $k \geq 3$.  See \cite{ACHK} for more details. We will prove:
\begin{thm}
 $T^*(f_1, \dots , f_{\ell})$ is bounded on $l^{p_1}(\Z^d)\times\ldots\times l^{p_l}(\Z^d) \to l^{r}(\Z^d)$, $\frac{1}{p_1} + \ldots + \frac{1}{p_l} \geq \frac{1}{r}$, $r>\max\{ r_0(d,k), \frac{d}{\ell d-k}\}$, $p_1,\ldots,p_{\ell}> 1$ and $d > d_{0}(k)$.  Moreover, the bound $r >  \frac{d}{\ell d-k}$ is a necessary condition.
\end{thm}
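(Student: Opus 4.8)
The plan is to mirror the degree-2 argument from Theorem \ref{Main theorem}, replacing the Hardy–Littlewood asymptotic in $\Z^d$ by the degree-$k$ version and the linear spherical maximal function by the linear degree-$k$ spherical maximal function. Concretely, I would first write
\[
|T_\lambda(f_1,\ldots,f_\ell)(\bm x)| \le \frac{1}{\lambda^{\frac{\ell d}{k}-1}}\sum_{\bm u_1^k+\cdots+\bm u_\ell^k=\lambda} |f_1(\bm x-\bm u_1)|\cdots|f_\ell(\bm x-\bm u_\ell)|,
\]
and peel off the average over $\bm u_\ell$: fixing $\bm u_1,\ldots,\bm u_{\ell-1}$, the inner sum over $\{\bm u_\ell^k = \lambda - \bm u_1^k - \cdots - \bm u_{\ell-1}^k\}$, divided by $\lambda^{d/k-1}$, is dominated by $S_k(f_\ell)(\bm x)$, the linear degree-$k$ discrete spherical maximal function. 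What remains is $\lambda^{-(d/k)((\ell-1)d/k-1)\cdot\text{(appropriate power)}}$ times an $(\ell-1)$-linear average of $|f_1|,\ldots,|f_{\ell-1}|$ over $\bm u_1^k+\cdots+\bm u_{\ell-1}^k \le \lambda$, which after taking the supremum and using the degree-$k$ count asymptotic $\lambda^{(\ell-1)d/k-1}$ reduces to the $(\ell-1)$-linear degree-$k$ maximal operator times $S_k(f_\ell)$. So the structure is an induction on $\ell$ with the base case $\ell=1$ being exactly the linear degree-$k$ result from \cite{ACHK}, which is bounded on $l^p$ for $p > p_0(d,k)$.

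Second, I would run the induction bookkeeping on the exponents. Assume the $(\ell-1)$-linear degree-$k$ operator is bounded $l^{p_1}\times\cdots\times l^{p_{\ell-1}} \to l^{s}$ for $\tfrac{1}{p_1}+\cdots+\tfrac{1}{p_{\ell-1}} \ge \tfrac{1}{s}$, $s > \max\{r_0^{(\ell-1)}(d,k), \tfrac{d}{(\ell-1)d-k}\}$; combine with $S_k$ bounded on $l^{p_\ell}$, $p_\ell > p_0(d,k) = \max\{1+\tfrac{1}{1+2\delta_0},\tfrac{d}{d-k}\}$, via Hölder to get $l^r$ with $\tfrac1r = \tfrac1s+\tfrac1{p_\ell}$, and then use $l^p$-nesting to open up to the full region $\tfrac1{p_1}+\cdots+\tfrac1{p_\ell}\ge\tfrac1r$. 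The arithmetic to check is that $\tfrac{1}{r_0^{(\ell-1)}(d,k)} + \tfrac{1}{p_0(d,k)}$ telescopes to $\tfrac{1}{r_0^{(\ell)}(d,k)}$ with the stated formula $r_0(d,k) = \tfrac{2+2\delta_0}{(\ell-1)(2+2\delta_0)+(1+2\delta_0)}$; writing $\sigma = 2+2\delta_0$ one sees $\tfrac{\ell-2}{\sigma} + \tfrac{1}{\sigma-1} + \tfrac{1}{1+2\delta_0}$ should collapse correctly once one notes $1+\tfrac{1}{1+2\delta_0} = \tfrac{\sigma}{\sigma-1}$ so its reciprocal is $\tfrac{\sigma-1}{\sigma}$ and the $\tfrac{1}{\sigma-1}$ pieces combine — I expect this to go through cleanly but it is the one place to be careful, in particular making sure the $\max$ with $\tfrac{d}{\ell d-k}$ (which comes from the crude count exponent and also from sharpness) is preserved at each step. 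The condition $d > d_0(k)$ is needed at every level for the count asymptotic $\lambda^{j d/k - 1}$ with $j = 2,\ldots,\ell$ to hold, and since $d_0(k)/j$ is decreasing in $j$ the binding constraint is $j=\ell$ — actually one wants $d > d_0(k)$ outright to be safe at $\ell=1$ inside $S_k$, which is the stated hypothesis.

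For the necessity of $r > \tfrac{d}{\ell d - k}$, I would adapt the testing example from Theorem \ref{Main theorem}: take $f_1 = \chi_L$ with $L = [-L,L]^d$ and $f_2 = \cdots = f_\ell = \delta_0$. Then all but the first function force $\bm u_2 = \cdots = \bm u_\ell = \bm x$, so $T_\lambda(f_1,\ldots,f_\ell)(\bm x)$ becomes $\lambda^{-(\ell d/k - 1)}$ times the count of $\bm u_1$ with $\bm u_1^k = \lambda - (\ell-1)|\bm x|^k$ and $\bm u_1 \in \bm x - L$; choosing $\lambda = \ell |\bm x|^k$ makes the sphere $\bm u_1^k = |\bm x|^k$ which contains $\bm u_1 = \bm x \in \bm x - L$, giving the pointwise lower bound $\gtrsim |\bm x|^{-k(\ell d/k - 1)} = |\bm x|^{-(\ell d - k)}$ for all $\bm x$ in a region of full density, hence $\|T^*(f_1,\ldots,f_\ell)\|_{l^r}^r \gtrsim \sum_{\bm x} |\bm x|^{-(\ell d - k)r}$, which converges iff $(\ell d - k)r > d$, i.e. $r > \tfrac{d}{\ell d - k}$. (When $k$ is odd the convention $y^k = |y|^k$ makes $\bm u_1 = \bm x$ genuinely solve $\bm u_1^k = |\bm x|^k$; when $k$ is even it is automatic.) The main obstacle I anticipate is not any single step but the exponent algebra in the induction — verifying that the messy-looking $r_0(d,k)$ is exactly the fixed point of the recursion $\tfrac{1}{r_0^{(\ell)}} = \tfrac{1}{r_0^{(\ell-1)}} + \tfrac{1}{p_0}$ with the right base case, and that this always sits at or above $\tfrac{d}{\ell d - k}$ so that the stated $\max$ is the truth; everything else is a direct transcription of the degree-2 proof.
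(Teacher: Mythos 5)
Your overall strategy --- the Jeong--Lee pointwise factorization into a maximal-function piece and a spherical piece, H\"older plus $l^p$-nesting, induction on $\ell$, and the $\chi_L, \delta_0,\dots,\delta_0$ testing example with $\lambda = \ell|\bm{x}|^k$ --- is exactly the paper's, and your necessity argument is a faithful transcription of theirs. The one step that fails as written is the induction bookkeeping, which is precisely the step you flagged. After you peel off the sum over $\bm{u_\ell}$ as $S_k(f_\ell)$, what remains is the $(\ell-1)$-fold \emph{ball} average of $|f_1|,\dots,|f_{\ell-1}|$ over $\{\bm{u_1}^k+\cdots+\bm{u_{\ell-1}}^k \le \lambda\}$, which is dominated by the product $\prod_{i<\ell} M_k(f_i)$ of Hardy--Littlewood maximal functions (each costing only $p_i>1$), \emph{not} by the $(\ell-1)$-linear spherical maximal operator. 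Your proposed recursion $\tfrac{1}{r_0^{(\ell)}} = \tfrac{1}{r_0^{(\ell-1)}} + \tfrac{1}{p_0}$ inserts a spherical factor at every level and therefore accumulates $\ell$ copies of $\tfrac{1}{p_0}$, giving $\tfrac{1}{r_0^{(\ell)}} = \tfrac{\ell(1+2\delta_0)}{2+2\delta_0}$, which does not equal the reciprocal of the stated $r_0(d,k)$, namely $(\ell-1) + \tfrac{1+2\delta_0}{2+2\delta_0}$; the telescoping you deferred checking does not close. The correct count is one spherical factor and $\ell-1$ Hardy--Littlewood factors, i.e.\ $\tfrac{1}{r_0^{(\ell)}} = (\ell-1)\cdot 1 + \tfrac{1}{p_0}$, equivalently the recursion $\tfrac{1}{r_0^{(\ell)}} = 1 + \tfrac{1}{r_0^{(\ell-1)}}$ with base case $\tfrac{1}{r_0^{(1)}} = \tfrac{1}{p_0}$. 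This is how the paper runs it: it peels off $M_k(|f_1|)$ (cost $p_1>1$) and inducts on the $(\ell-1)$-linear \emph{spherical} operator $S_k^{\ell-1}(f_2,\dots,f_\ell)$, so that exactly one exponent $p_0(d,k)$ appears, at the base of the induction. Your peeling from the other end works equally well once the leftover is identified as a product of $M_k$'s rather than a multilinear spherical operator; with that fix the exponent arithmetic, the $\max$ with $\tfrac{d}{\ell d-k}$, and the rest of your argument all match the paper.
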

\begin{proof}
We begin with the first claim.  Using the Hardy-Littlewood asymptotic, we get that 
\[
|T_\lambda (f_1,\ldots,f_{\ell})(\bm{x})| \leq \frac{1}{\lambda^{\frac{d}{k}}}\sum_{|\bm{u_1}|^k \leq \lambda}|f_1(\bm{x}-\bm{u_1})|\cdot \frac{1}{\lambda^{\frac{(\ell -1)d}{k}-1}} \bigg|\sum_{|\bm{u_2}|^k+ \dots + |\bm{u_\ell}|^k = \lambda - |\bm{u_1}|^k} f_2(\bm{x}-\bm{u_2})\cdots f_\ell (\bm{x} - \bm{u_\ell})\bigg|.
\]
So 
\[
\sup_{\lambda\in\N}|T_\lambda (f,g)(\bm{x})| \leq M_k(|f_1|) \cdot S_k^{\ell - 1}(f_2, \dots , f_\ell )
\]
where $M_k$ is the discrete Hardy-Littlewood maximal function defined over $k$-balls and $S_k^{\ell - 1}$ is the discrete $\ell-1$ linear $k$ spherical maximal function; this is true for any re-arrangement of the functions.  Again, using induction, we see that $S_k^{\ell - 1}$ is bounded on $l^{p_2}(\Z^d)\times\ldots\times l^{p_l}(\Z^d)$, for all $p_2, \dots p_{\ell-1} > 1$, $p_\ell > p_0(d,k)$, $d > d_0(k)$, while $M_k$ is bounded for all $p_1 >1$.  By H\"older's inequality, we get the range of $r$. 

If we knew that the discrete linear $k$ spherical maximal function was bounded on the conjectured sharp range of $p_0(d,k) = \frac{d}{d-k}$, then we would have $r > \frac{d}{\ell d-k}$, which is the necessary condition that we claim.

Now we show this necessary condition.  We modify the earlier example.  Choose $f_1 = \chi_L$ where $L = [-L, L]^d$ for some $L \geq 1$, and $f_2, \dots, f_\ell = \delta_0$.  Then 
\[
\|\sup_{\lambda\in\N}|T_\lambda (f_1, \dots , f_\ell))\|_{l^r(\Z^d)} = 
\bigg(\sum_{\bm{x} \in \Z^d}\sup_{\lambda\in\N}\bigg|\frac{1}{\lambda^{\frac{\ell d}{k}-1}}\sum_{|\bm{u_1}|^k+\cdots +|\bm{u_\ell}|^k =  \lambda}\chi_L(\bm{x}-\bm{u_1})\delta_0(\bm{x}-\bm{u_2})\dots\delta_0(\bm{x}-\bm{u_\ell})\bigg|^r\bigg)^{1/r}
\]
\[
= \bigg(\sum_{\bm{x} \in \Z^d}\sup_{\lambda\in\N}\bigg|\frac{1}{\lambda^{\frac{\ell d}{k}-1}}\sum_{\substack{|\bm{u_1}|^k=\lambda -(\ell -1)|\bm{x}|^k\\  \bm{u} \in \bm{x}-L }}\bigg|^r\bigg)^{1/r}
\]
by choosing $\lambda = \ell|\bm{x}|^k$, so that $|\bm{u}|^k = |\bm{x}|^k$ and $\bm{u} \in \bm{x}-L$ the above is
\[
 \geq C\bigg(\sum_{\bm{x}\in\Z^d} \frac{1}{|\bm{x}|^{(\ell d -k))r}}\bigg)^{1/r} 
\]
which converges if and only if $r \geq \frac{d}{\ell d-k}$, which matches the sharp range for $k=2$.  Since this proof of the necessary condition does not rely on the bounds for the discrete degree $k$ spherical maximal function, we can actually dip lower dimensionally than $d_0(k)$ here.  In particular, here we can have $d \geq 4k$ when $k \geq 4$ is a power of 2 and $d \geq \frac{3}{2}k$ otherwise \cite{Vaughan}. 
\end{proof}

\bibliographystyle{amsplain}

\end{document}